\theoremstyle{plain}
\newtheorem{theorem}{Theorem}
\newtheorem*{theorem*}{Theorem}
\newtheorem{lemma}[theorem]{Lemma}
\newtheorem*{lemma*}{Lemma}
\newtheorem{prop}[theorem]{Proposition}
\newtheorem{corollary}[theorem]{Corollary}
\newtheorem{fact}[theorem]{Fact}
\newtheorem*{fact*}{Fact}
\theoremstyle{definition}
\newtheorem*{definition*}{Definition}
\newtheorem*{quest*}{Question}
\setlist[enumerate]{noitemsep, topsep=1pt}
\setlist[itemize]{noitemsep,topsep=.5pt}
\newcommand{\Z}{\mathbb{Z}}
\newcommand{\N}{\mathbb{N}}
\newcommand{\chr}[2]{\ensuremath{{#1} \,\text{char}~ {#2}}}
\newcommand{\plnorm}{\triangleleft}
\newcommand{\order}[1]{\ensuremath{{\lvert {#1} \rvert}}}
\newcommand{\dv}{\ensuremath{\,\Big\lvert\,}}
\newcommand{\cyclic}[1]{\ensuremath{\langle {#1} \rangle}}
\newcommand{\Syl}[2]{\ensuremath{\text{Syl}_{{#1}}({#2})}}
\begin{document}

\title{\textbf{A Generalization of the Hughes Subgroup}}
\author{Mark L. Lewis and Mario Sracic}
\date{}
\maketitle

\begin{abstract}
Let $G$ be a finite group, $\pi$ be a set of primes, and define $H_{\pi}(G)$ to be the subgroup generated by all elements of $G$ which do not have prime order for every prime in $\pi$. In this paper, we investigate some basic properties of $H_{\pi}(G)$ and its relationship to the Hughes subgroup. We show that for most groups, only one of three possibilities occur: $H_{\pi}(G) = 1$, $H_{\pi}(G)=G$, or $H_{\pi}(G) = H_{p}(G)$ for some prime $p \in \pi$.  There is only one other possibility: $G$ is a Frobenius group whose Frobenius complement has prime order $p$, and whose Frobenius kernel, $F$, is a nonabelian $q$-group such that $H_{\pi}(G)$ arises as the proper and nontrivial Hughes subgroup of $F$.  We investigate a few restrictions on the possible choices of the primes $p$ and $q$.
\end{abstract}

\textbf{Mathematics Subject Classification.}\quad 20D25. \\

\textbf{Keywords.}\quad Hughes subgroup, Frobenius groups, elements of prime order. \\

\begin{enumerate}
\item[\textbf{1.}] \textbf{Introduction}
\end{enumerate}

In 1957, D. R. Hughes posed the following problem: Let $G$ be any group and $p$ be a prime.  Consider the following subgroup,
	\[
	H_{p}(G) := \cyclic{x \in G : x^{p} \neq 1},
	\]
or $H_{p}$ when $G$ is clear from the context, which we call the Hughes subgroup of $G$ relative to $p$.  Hughes asked, ``is the following conjecture true: either $H_{p} =1, H_{p}=G$, or $\order{G:H_{p}} =p$?'' \cite{Hughes58}.  Hughes had proved this conjecture for $p=2$ two years prior \cite{Hughes56}, and shortly thereafter, Straus and Szekeres \cite{StrSzek58} answered in the affirmative for $p=3$.  The conjecture was settled completely for finite non-$p$-groups by Thompson and Hughes in 1959 \cite{HughThomp59}.  Furthermore, Hughes and Thompson defined and classified $H_{p}$-groups: a nontrivial finite group $H$ is an $H_{p}$-group if it arises as $H_{p}(G)$, for some finite group $G$, with index $p$.  The conjecture was shown to be false (in general) for $p$-groups ($p \geq 5$) by G. E. Wall \cite{Wall67} through the construction of a counterexample, $G$, with $\order{G:H_{5}}=25$.

The study of Hughes subgroups has proven to be a rich area of study given the natural extension of Frobenius groups.  However, our focus is a natural generalization of the subgroup itself similar to that as described in \cite{ErcanGul90}:  Let $G$ be a finite group and $n \in \N$.  The generalized Hughes subgroup of $G$, relative to $n$, is defined as
	\[
	H_{n}(G) := \cyclic{x \in G : x^{n} \neq 1}.
	\]
We introduce a new generalization as follows: Let $\pi$ be a set of primes and consider the following subgroup,
	\[
		H_{\pi}(G) := \cyclic{x \in G : x^{p}\neq 1~\text{for all}~p \in \pi}.
	\]
In this paper we examine some basic properties of $H_{\pi}(G)$, the influence of $H_{\pi}(G)$ on the structure of a finite group $G$, and its relationship with the Hughes subgroup.  We will see that for most groups, only one of three possibilites occur: (1) $H_{\pi}(G)=G$, (2) $H_{\pi}(G) = H_{p}(G)$ for some prime $p \in \pi$, or (3) $H_{\pi}(G) = 1$.  It will be shown that only one other possibility can occur and we can characterize this possibility.  In particular, we prove the following:

\begin{theorem}\label{mainthm}
Let $G$ be a finite solvable group and set $\pi=\pi(G)$.  Then $1 < H_{\pi}(G) < H_{p}(G)$ for all $p \in \pi$ if and only if $G$ is a Frobenius group whose Frobenius kernel, $F$, is a nonabelian $q$-group such that $1 < H_{q}(F) < F$, and whose Frobenius complement has prime order.  In this case, $H_{\pi}(G) = H_{q}(F)$.
\end{theorem}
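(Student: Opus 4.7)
The plan is to handle the two directions separately. The backward direction is a direct verification: if $G = F \rtimes C$ is a Frobenius group with nonabelian $q$-group kernel $F$ satisfying $1 < H_q(F) < F$ and complement $C$ of prime order $p$, then $\pi(G) = \{p,q\}$, and every element of $G$ either lies outside $F$ (necessarily of order $p$, so fails $x^p \neq 1$) or lies in $F$ (of $q$-power order, in which case $x^p \neq 1$ is automatic and $x^q \neq 1$ matches the generator condition for $H_q(F)$). Hence $H_\pi(G) = H_q(F)$, and the strict inclusions $1 < H_\pi(G) < H_p(G)$ follow from the hypotheses on $H_q(F)$ together with $F \subseteq H_p(G)$, while $C \subseteq H_q(G) \setminus H_\pi(G)$ gives $H_\pi(G) < H_q(G)$.

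For the forward direction, assume $1 < H_\pi(G) < H_p(G)$ for every $p \in \pi$. Any $g \in G$ of composite order satisfies $g^r \neq 1$ for every prime $r \in \pi$, so $g \in H_\pi(G)$; consequently $G/H_\pi(G)$ is a nontrivial finite solvable group in which every nontrivial element has prime order. By Higman's classification of such groups, $G/H_\pi(G)$ is either a $p$-group of exponent $p$ or a Frobenius group with nilpotent kernel $\bar F$ of exponent $q$ and complement of prime order $p$ (for distinct primes $p,q$). The first alternative contradicts $H_\pi(G) < H_p(G)$, since then every element outside $H_\pi$ would have order $p$. So we have the Frobenius quotient; letting $F$ be the preimage of $\bar F$ in $G$, we obtain $F \triangleleft G$ with $|G:F| = p$, and any $x \in G \setminus F$ has prime order (being outside $H_\pi$) with its image in $G/H_\pi$ of order $p$, forcing $|x| = p$.

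Since every generator of $H_p(G)$ has order different from $p$, all such generators lie in $F$, and so $H_p(G) \leq F < G$. Combined with $H_p(G) \geq H_\pi(G) > 1$, the Hughes--Thompson theorem forces $|G:H_p(G)| = p$ and, via the structure of $H_p$-groups, identifies $G$ as a Frobenius group with kernel $H_p(G) = F$ and complement of order $p$. By Thompson's theorem on Frobenius kernels, $F$ is nilpotent and decomposes as a direct product $F = \prod_r F_r$ of its Sylow subgroups. The crux is to show $F$ is a $q$-group: if some $F_r$ with $r \neq q$ were nontrivial, then any $y \in F_r$ of order $r$ would satisfy $\bar y = 1$ in $\bar F$ (since $|\bar y|$ divides $\gcd(r,q) = 1$), hence $y \in H_\pi$; for any $x \in F_q$ of order $q$, the commuting product $xy$ would have composite order $qr$, placing $xy \in H_\pi$ and forcing $x = (xy)y^{-1} \in H_\pi$. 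Iterating over all Sylow primes of $F$ yields $F \subseteq H_\pi \subseteq F$, so $H_\pi = F = H_p(G)$, contradicting strictness. Thus $F$ is a $q$-group, $H_\pi(G) = H_q(F)$ by the analysis used in the backward direction, $1 < H_q(F) < F$ translates the hypotheses, and $F$ must be nonabelian since an abelian $q$-group has $H_q$ equal to either $1$ or to itself.

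The main obstacle is the crux argument of the third paragraph: converting the strict containment $H_\pi(G) < H_p(G)$ into the statement that the Frobenius kernel furnished by Hughes--Thompson has only one Sylow component. The Hughes--Thompson machinery alone gives a Frobenius structure on $G$, but in principle its kernel could mix several primes, and it is precisely the strictness hypothesis that rules this out via the commuting-product argument.
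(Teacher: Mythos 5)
Your proof is correct, and both directions check out; but your forward direction is organized differently from the paper's. The paper first uses its Lemma \ref{lem1} and Proposition \ref{prop1} to produce a \emph{unique} prime $p$ with $H_p(G)<G$, and then applies Qian's Theorem 1 in its relative form (to the normal subgroup $H_\pi(G)$, all of whose outside elements have prime order); in the solvable, non-prime-power setting only case (ii) survives, so $G$ is immediately a Frobenius group with complement of prime order $p$ and kernel $F$ of prime power order, after which Hughes--Thompson identifies $F=H_p(G)$ and the computation $H_\pi(G)=H_q(F)$, $Z(F)\leqslant H_\pi(G)<F$ finishes. You instead pass to the quotient $G/H_\pi(G)$ and use only the absolute ($N=1$) classification of solvable groups all of whose nontrivial elements have prime order (this is really the Deaconescu/Cheng et al.\ classification, or Qian with $N=1$, rather than Higman's prime-power-order theorem -- a citation quibble only), rule out the $p$-group alternative by strictness, and then must recover by hand the fact that the kernel of $G$ itself is a $q$-group: Hughes--Thompson to get $|G:H_p(G)|=p$ and the Frobenius structure, Thompson's nilpotence theorem, and your commuting-product argument across Sylow components. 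That last argument is the genuine addition: it makes explicit and elementary why strictness of $H_\pi(G)<H_p(G)$ forbids a mixed-prime kernel, a point the paper absorbs into Qian's case (ii). The trade-off is that your route needs Thompson's nilpotence theorem as a separate input and is longer, while the paper's route leans harder on the relative classification and on Lemma \ref{lem1}/Proposition \ref{prop1}, which it reuses later; your alternative derivation of nonabelianness of $F$ (an abelian $q$-group is the union of $H_q(F)$ and the subgroup of elements of order dividing $q$) is the same union-of-two-subgroups idea as the paper's $Z(F)\leqslant H_\pi(G)$ argument and is equally valid.
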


At this point, it remains an open question whether such groups as in Theorem \ref{mainthm} actually exist.  The existing body of work on the (original) Hughes subgroup provides a means to eliminate a few possible choices for the primes $p$ and $q$.  In particular, $p \geq 7$ and $q \geq 5$.

We would like to thank Professor Khukhro for several helpful comments while writing this paper.

\begin{enumerate}
\item[\textbf{2.}] \textbf{Preliminaries}
\end{enumerate}
	
One can always find examples of proper and nontrivial generalized Hughes subgroups by considering Frobenius actions.  For example, the natural action of the multiplicative group of a field on the additive group of said field.  For a given finite group $G$, we can restrict the sets of primes to consider in computing $H_{\pi}(G)$.  Clearly, if every element of $G$ has prime order or $\pi \cap \pi(G) \neq \emptyset$, then $H_{\pi}$ is either trivial or improper, respectively. 

\begin{fact}
Let $G$ be a finite group.  If $\pi_{1}$ and $\pi_{2}$ are two sets of primes such that $\pi_{1} \subseteq \pi_{2}$, then $H_{\pi_{2}}(G) \leqslant H_{\pi_{1}}(G)$.
\end{fact}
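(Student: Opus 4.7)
The plan is to argue directly from the definition of $H_{\pi}(G)$, observing that enlarging $\pi$ makes the defining condition on generators strictly more restrictive. Since both subgroups are defined as the subgroup generated by a certain set of elements, it suffices to show the containment of generating sets; the inclusion of subgroups then follows from the standard fact that if $S \subseteq T \subseteq G$ then $\cyclic{S} \leqslant \cyclic{T}$ (in the form $\cyclic{T'} \subseteq \cyclic{S'}$ when $T' \subseteq S'$).

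Concretely, I would let $S_i := \{x \in G : x^p \neq 1 \text{ for all } p \in \pi_i\}$ for $i = 1, 2$, so that $H_{\pi_i}(G) = \cyclic{S_i}$. Take any $x \in S_2$; by definition $x^p \neq 1$ for every $p \in \pi_2$. Since $\pi_1 \subseteq \pi_2$, this condition holds in particular for every $p \in \pi_1$, so $x \in S_1$. Hence $S_2 \subseteq S_1$, which yields $H_{\pi_2}(G) = \cyclic{S_2} \leqslant \cyclic{S_1} = H_{\pi_1}(G)$.

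There is no real obstacle here: the statement is essentially an unpacking of the definition, and the apparent reversal of direction (larger $\pi$ gives a smaller subgroup) is simply the contravariance built into a condition of the form ``$x^p \neq 1$ for all $p \in \pi$'' as $\pi$ grows.
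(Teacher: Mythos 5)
Your proof is correct and is exactly the argument the paper has in mind: the paper states this Fact without proof, treating it as immediate from the definition, and your observation that the generating set $S_2$ is contained in $S_1$ (so $\cyclic{S_2} \leqslant \cyclic{S_1}$) is the standard one-line justification.
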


\begin{fact}
Let $G$ be a finite group, $\pi$ be a set of primes, and set $\pi_{0}=\pi\cap \pi(G)$.  Then $H_{\pi}(G) = H_{\pi_{0}}(G)$.
\end{fact}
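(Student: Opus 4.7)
The plan is to verify the two inclusions separately. The containment $H_{\pi}(G) \leqslant H_{\pi_{0}}(G)$ is immediate from the previous Fact applied to $\pi_{0} \subseteq \pi$, so the substance lies entirely in the reverse inclusion.

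For $H_{\pi_{0}}(G) \leqslant H_{\pi}(G)$, I would show that every nontrivial generator of the former is also a generator of the latter. Let $x \in G$ with $x \neq 1$ and $x^{p} \neq 1$ for every $p \in \pi_{0}$; I claim $x^{q} \neq 1$ for every $q \in \pi$. If $q \in \pi_{0}$ this is the hypothesis. If instead $q \in \pi \setminus \pi_{0}$, then by definition of $\pi_{0}$ we have $q \notin \pi(G)$, so $q \nmid |G|$. By Lagrange, $|x|$ divides $|G|$ and hence is coprime to $q$, which forces $x^{q} \neq 1$ (in fact $x^{q}$ has the same order as $x$). So every nontrivial element of the generating set of $H_{\pi_{0}}(G)$ lies in the generating set of $H_{\pi}(G)$, and since including or omitting the identity does not alter the subgroup generated, the inclusion follows.

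The only minor subtlety is the degenerate case $\pi_{0} = \emptyset$, where the condition defining $H_{\pi_{0}}(G)$ is vacuous and gives $H_{\pi_{0}}(G) = G$; in that case every $q \in \pi$ satisfies $q \nmid |G|$, so by the same Lagrange argument every nontrivial element of $G$ is a generator of $H_{\pi}(G)$ and hence $H_{\pi}(G) = G$ as well. Beyond this, there is no real obstacle: the statement is a direct unpacking of the definition combined with Lagrange's theorem.
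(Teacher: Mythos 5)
Your proof is correct. The paper states this as a Fact without proof, treating it as immediate from the definitions, and your argument --- one inclusion from the previous Fact, the other by observing via Lagrange that a prime $q \notin \pi(G)$ can never kill a nontrivial element, plus the vacuous case $\pi_{0}=\emptyset$ --- is exactly the intended justification.
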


Consequently, we may assume $\pi \subseteq \pi(G)$, and further, $\order{\pi}\geq 2$; otherwise $H_{\pi}(G)$ coincides with the Hughes subgroup relative to the unique prime in $\pi$.  From the definitions of $H_{\pi}$ and $H_{p}$, we obtain the following set of inclusions:
	\begin{equation}\label{eqn1}
	1 \leqslant H_{\pi}(G) \leqslant \bigcap_{p \in \pi} H_{p}(G) \leqslant G,
	\end{equation}
which lead to questions regarding what conditions, if any, ensure that the inclusions in (\ref{eqn1}) are proper.  To this end, we first address intersections of Hughes subgroups.  

\begin{prop}\label{prop1}
Let $G$ be a finite group with $\order{\pi(G)}\geq 2$.  Then
	\begin{eqnarray}
	\bigcap_{p\in\pi(G)} H_{p}(G) = \left\{\begin{array}{lr}
	H_{p}(G), & \text{for a unique prime $p\in\pi(G)$,} \\
	G, & \text{otherwise}. \\
	\end{array}\right.
	\end{eqnarray}
\end{prop}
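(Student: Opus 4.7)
The plan rests on a single structural observation: if $x \notin H_p(G)$, then $x$ cannot appear among the generating set $\{y \in G : y^p \neq 1\}$ of $H_p(G)$, so $x^p = 1$. Since $1 \in H_p(G)$, the element $x$ must have order exactly $p$. Hence for every prime $p \in \pi(G)$,
\[
G \setminus H_p(G) \subseteq \{x \in G : \text{ord}(x) = p\}.
\]

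From this I would deduce that for any two distinct primes $p,q \in \pi(G)$ one has $H_p(G) \cup H_q(G) = G$: a given $x \in G$ either lies in $H_p(G)$, or it has order $p$; but in the latter case $x^q \neq 1$ (since $q \neq p$), so $x \in H_q(G)$. Appealing to the elementary fact that a group cannot be written as the union of two proper subgroups, I conclude that among any two members of $\{H_r(G) : r \in \pi(G)\}$, at least one must equal $G$. In particular, at most one prime $r \in \pi(G)$ can have $H_r(G) < G$.

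The proposition then follows by a simple case split. If every $H_r(G)$ equals $G$, the intersection is $G$. Otherwise there is a prime $p$ with $H_p(G) < G$; by the previous step this $p$ is unique, every other $H_q(G)$ equals $G$, and intersecting gives $\bigcap_{r \in \pi(G)} H_r(G) = H_p(G)$. I do not anticipate a significant obstacle here; the only subtlety is the opening observation, which exploits the fact that $H_p(G)$ is defined as the subgroup \emph{generated} by a prescribed set rather than as that set itself, so membership of a single element reduces to checking whether it is itself a generator.
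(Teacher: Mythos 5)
Your proposal is correct and follows essentially the same route as the paper: both arguments reduce to showing that for two distinct primes $p,q\in\pi(G)$ every element of $G$ lies in $H_{p}(G)$ or $H_{q}(G)$ (you via the union directly, the paper via the emptiness of $(G\setminus H_{p})\cap(G\setminus H_{q})$), and then invoke the fact that a group cannot be the union of two proper subgroups. Your opening observation that $G\setminus H_{p}(G)$ consists of elements of order exactly $p$ is the same key point the paper uses.
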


\begin{proof}
	If $H_{p}(G)=G$ for all $p \in \pi(G)$, then there is nothing to show.  Suppose there are distinct primes $p,q \in \pi(G)$ such that $H_{p} < G$ and $H_{q}< G$, and note both Hughes subgroups are necessarily nontrivial.  By a simple set theoretic argument, we have $(G\setminus H_{p}) \cap (G\setminus H_{q}) = G \setminus (H_{p}\cup H_{q})$.  Inasmuch as the only element $x$ to simultaneously satisfy $x^{q} = 1= x^{p}$ is the identity, the left-hand-side is empty.  Thus $G = H_{p} \cup H_{q}$ which is impossible.
\end{proof}

We note that explicit examples exist where $H_{\pi}(G)$ is properly contained in a Hughes subgroup, albeit at the cost of being trivial.  The simplest example is the Frobenius group $G=S_{3}$, where
\[
1=H_{\{2,3\}}(S_{3}) < H_{2}(S_{3}) < S_{3}.
\]
A second example is to take $E = GF(3^{3})$ and consider the natural Frobenius action of the subgroup $H$ of $E^{\times}$ of order 13 on the additive group $N$ of $E$.  The Galois group, $G=Gal(E/\Z_{3})$ acts naturally on the resulting Frobenius group $\Gamma_{0} = NH$, and so we can consider the semidirect product $\Gamma = \Gamma_{0}G$.  In this example, we have 
\[
1=H_{\{3,13\}}(\Gamma) < H_{13}(\Gamma_{0})=N < H_{3}(\Gamma) = \Gamma_{0} < \Gamma.
\]

\begin{enumerate}
\item[\textbf{3.}] \textbf{Results and Proof of Theorem}
\end{enumerate}

Next, we consider what influence $H_{\pi}(G)$ has on the structure of a finite group $G$.  By definition, every element of $G \setminus H_{\pi}(G)$ has prime order for some prime in $\pi$.  Thus, if $H_{\pi}(G)$ is trivial, then all nonidentity elements of $G$ have prime order, and such groups were completely classified by Deaconescu \cite{Deaconescu89}, and Cheng, et. al. \cite{Cheng93}.  Otherwise, we fall under a situation investigated by Qian \cite{Qian2005}: 

\begin{theorem}[``Theorem 1'', \cite{Qian2005}]
Let $G$ be a finite group and $N\plnorm G$ such that every element of $G \setminus N$ is of prime order.  Then $G$ has one of the following structures:
	\begin{enumerate}[i$)$.]
	\item $G = A_{5}$ and $N=1$.
	\item $G = F \rtimes A$ is a Frobenius group, where the complement, $A$, is of prime order, and the Frobenius kernel, $F$, is of prime power order, with $N < F$.
	\item $G$ is a $p$-group.
	\item $G = (L \times K) \rtimes A$, where $L \times K$ is nilpotent, $A$ is of prime order $p$, $K \rtimes A \in \Syl{p}{G}$, $L \plnorm G$ is a Hall $p'$-subgroup of $G$, $A$ acts fixed-point-freely on $L$, and $N = L \times K$.
	\end{enumerate}
\end{theorem}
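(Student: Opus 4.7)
The strategic core is to reduce to a known classification by passing to the quotient $G/N$. I would first observe that every non-identity element of $G/N$ has prime order: if $gN \neq N$, pick a representative $g \in G \setminus N$ satisfying $g^p = 1$ for some prime $p$, so $(gN)^p = N$ and the order of $gN$ is exactly $p$. The classification of finite groups in which every non-identity element has prime order \cite{Deaconescu89, Cheng93} then applies to $G/N$, giving four possibilities: trivial, a $p$-group of exponent $p$, $A_5$, or a Frobenius group $P \rtimes C_q$ with $P$ elementary abelian of $p$-power order and $C_q$ of prime order $q \neq p$.

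The bulk of the work is lifting each of these possibilities to a structural description of $G$. The main technical lemma is that for any $x \in N$ and $g \in G \setminus N$ of prime order $r$, the product $xg$ again lies in $G \setminus N$ and hence has prime order $s$; expanding $(xg)^s = 1$ yields rigid conjugation relations between $g$ and $x$. Applied case by case: if $G/N \cong A_5$, these relations would force $A_5$ to act fixed-point-freely on $N$, which is impossible since Sylow $2$-subgroups of $A_5$ are non-cyclic, hence $N = 1$ and $G = A_5$ (case i). If $G/N$ is a $p$-group of exponent $p$, the relations annihilate every Hall $p'$-subgroup of $N$, forcing $G$ itself to be a $p$-group (case iii). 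If $G/N$ is Frobenius $P \rtimes C_q$, the Frobenius decomposition lifts to $G = F \rtimes A$ with $F$ a $p$-group, $|A| = q$, and $N < F$ (case ii). Finally, if $G/N$ is cyclic of prime order $p$ and $G$ is not a $p$-group, one splits $N$ into its Hall $p'$-part $L$ and a $p$-part $K$: the prime-order constraint forces $A$ to act fixed-point-freely on $L$, to stabilize $K$, and to satisfy $[L,K] = 1$, producing case (iv).

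The main obstacle I expect is case (iv), where one must establish simultaneously that $N = L \times K$ as an internal direct product, that the action of $A$ on $L$ is fixed-point-free, and that $K \rtimes A$ is a full Sylow $p$-subgroup of $G$; any slippage in any one of these points either contradicts the prime-order hypothesis or collapses the case back to (ii) or (iii). The $A_5$ case is a secondary difficulty, requiring one to rule out every nontrivial module-like action of $A_5$ on $N$: here an explicit argument using that elements of orders $2, 3, 5$ in $A_5$ must lift to elements of the same orders in $G \setminus N$, combined with the non-existence of fixed-point-free actions of $A_5$ on any nontrivial group (Thompson), delivers the conclusion.
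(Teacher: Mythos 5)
This statement is not proved in the paper at all: it is Qian's Theorem~1, quoted from \cite{Qian2005} as a black box, so there is no in-paper argument to compare yours against. Judged on its own terms, your opening reduction is fine and is surely the right first move: every nontrivial element of $G/N$ has prime order, so the Deaconescu--Cheng et al.\ classification applies to the quotient. But everything after that --- the lifting --- is where the entire content of the theorem lives, and your treatment of it has genuine gaps. Most concretely, the claim ``if $G/N$ is a $p$-group of exponent $p$, the relations annihilate every Hall $p'$-subgroup of $N$, forcing $G$ itself to be a $p$-group'' is false as stated: case~(iv) of the theorem is exactly a counterexample, with $G/N\cong A$ cyclic of order $p$ (hence a $p$-group of exponent $p$) and $G$ not a $p$-group. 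You do treat the cyclic-of-order-$p$ quotient separately later, but that directly contradicts the earlier assertion, and you never justify why a quotient that is a $p$-group of exponent $p$ and order at least $p^{2}$ cannot also lead to a non-$p$-group $G$. Moreover, the phrase ``annihilate every Hall $p'$-subgroup of $N$'' presumes $N$ has Hall $p'$-subgroups and a usable normal structure; a priori $N$ is an arbitrary normal subgroup (it could even be nonabelian simple), and establishing enough control over $N$ is part of the work.

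The other lifting steps are likewise asserted rather than proved. The one honest tool you name --- that for $1\neq x\in C_{N}(g)$ with $g\in G\setminus N$ of prime order $s$, the element $xg$ has prime order, whence $x^{s}=1$ and $C_{N}(g)$ has exponent $s$ --- is indeed the right local lemma, but in case~(ii) it only tells you that elements of $F\setminus N$ have order $p$ and that $F$ is generated by such elements; it does not by itself make the preimage $F$ a group of prime power order, since nothing yet constrains the orders of elements inside $N$. Promoting a Frobenius structure on $G/N$ to a Frobenius structure on $G$ with nilpotent kernel is precisely the kind of statement (compare the Hughes--Thompson theorem) that cannot be waved through. The $A_{5}$ case is closer to complete --- the centralizer lemma plus the fact that a Frobenius complement cannot contain a Klein four-group does the job, though one must first reduce to showing the action on each Sylow piece of $N$ is fixed-point-free. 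In short: right opening move, but the proposal is an outline of a proof, not a proof, and one of its stated intermediate claims is false.
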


In the pursuit of cases where $H_{\pi}$ is nontrivial, we may assume $G$ is solvable; otherwise, $G = A_{5}$ by the above.  Towards this end, we establish a minor relationship between $H_{\pi}(G)$ and Hughes subgroups.

\begin{lemma}\label{lem1}
Let $G$ be a finite solvable group, $\pi$ be a nonempty subset of $\pi(G)$, and assume $\order{\pi(G)} \geq 2$.  Then $H_{\pi}(G) = G$ if and only if $H_{p}(G) = G$ for all $p \in \pi$.
\end{lemma}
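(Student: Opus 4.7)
The plan is to deduce $(\Rightarrow)$ immediately from the chain (\ref{eqn1}) and to establish $(\Leftarrow)$ by contrapositive, invoking Qian's theorem stated above. For $(\Rightarrow)$, the chain $H_\pi(G) \leq H_p(G) \leq G$ forces $H_p(G) = G$ for every $p \in \pi$ whenever $H_\pi(G) = G$. The case $|\pi| = 1$ makes $(\Leftarrow)$ tautological, so I will assume $|\pi| \geq 2$.

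For the contrapositive of $(\Leftarrow)$, I assume $H_\pi(G) < G$ and aim to produce some $p \in \pi$ with $H_p(G) < G$. Set $N = H_\pi(G)$, which is normal in $G$ because its defining set of generators is closed under conjugation. By the very definition of $H_\pi$, every non-identity element of $G \setminus N$ has prime order lying in $\pi$, so $(G, N)$ satisfies the hypothesis of Qian's theorem. Solvability of $G$ rules out case (i), and $|\pi(G)| \geq 2$ rules out case (iii). In Qian's case (ii), $G = F \rtimes A$ is Frobenius with $F$ a $q$-group and $|A| = r$ prime, so $\pi(G) = \{q, r\}$ and hence $r \in \pi$; every non-identity element of $F$ has $q$-power order (so $x^r \neq 1$), while every element of $G \setminus F$ lies in a conjugate of $A$ and has order $r$, yielding $H_r(G) = F < G$, the desired contradiction.

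Qian's case (iv) is the main obstacle. Here $G = (L \times K) \rtimes A$ with $|A| = p$ and $N = L \times K$, and I will split on whether $p \in \pi$. Noting first that every element of $G \setminus N$ has order $p$ (since $G/N$ is cyclic of order $p$ and Qian forces prime order), I handle $p \in \pi$ by observing that $(lk)^p = l^p k^p$ for $x = lk \in N$ (by nilpotency of $N$), so any generator $x$ of $H_p(G)$ with $x^p \neq 1$ must lie in $N$, while all generators from $G \setminus N$ vanish; thus $H_p(G) \leq N < G$. If instead $p \notin \pi$, then $\pi \subseteq \pi(L)$, and for every $q \in \pi$ each of $A \setminus \{1\}$, $K \setminus \{1\}$, and $G \setminus N$ consists of elements $x$ with $x^q \neq 1$, so all three sets lie in the generating set of $H_\pi(G)$; applying $l = (la) a^{-1}$ with $a \in A \setminus \{1\}$ then forces $L \subseteq H_\pi(G)$, whence $H_\pi(G) \supseteq LKA = G$ contradicts $H_\pi(G) < G$. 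The delicate step is this last sub-case, where the contradiction must be extracted by showing $H_\pi(G) = G$ outright rather than by locating a single $H_p(G) < G$.
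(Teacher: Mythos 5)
Your proposal is correct, but it takes a genuinely different route from the paper. The paper's proof of the nontrivial direction is a short, self-contained argument: assuming $H_{\pi}(G) < G$, it takes a maximal normal subgroup $M \supseteq H_{\pi}(G)$, uses solvability to get $|G:M| = p$ prime, observes that $\exp(G/M)$ divides $\prod_{q \in \pi} q$ (since every element outside $M$ has prime order in $\pi$) so that $p \in \pi$, and then derives a contradiction because the hypothesis $H_{p}(G) = G$ supplies a generator $x \notin M$ whose order is a prime $q \in \pi \setminus \{p\}$, forcing $xM$ to be a nontrivial $q$-element of a group of order $p$. You instead run the contrapositive through Qian's classification with $N = H_{\pi}(G)$, disposing of cases (i) and (iii) and extracting a prime $p \in \pi$ with $H_{p}(G) < G$ in cases (ii) and (iv). Your route works and has the side benefit of exposing the structure of $G$ along the way, but it is considerably heavier machinery for the same conclusion, and it inherits the burden of checking every branch of the classification; the paper's argument is more elementary and does not depend on Qian's theorem at all.

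Two small points in your write-up deserve attention, though neither is fatal. In case (ii), the inference ``$\pi(G) = \{q,r\}$ and hence $r \in \pi$'' is a non sequitur as written, since $\pi$ could be $\{q\}$; the claim is nevertheless true because the elements of $G \setminus F \subseteq G \setminus N$ have order $r$ and, lying outside $H_{\pi}(G)$, must have order equal to some prime of $\pi$ (alternatively, if $\pi = \{q\}$ then $H_{q}(G) = H_{\pi}(G) < G$ finishes that branch directly). In case (iv) with $p \notin \pi$, your argument can be shortened: every element of $G \setminus N$ has order $p \notin \pi$ and is therefore a generator of $H_{\pi}(G)$, giving $G \setminus N \subseteq N$, which is already absurd; the computation $l = (la)a^{-1}$ is not needed. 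Likewise, the observation $(lk)^{p} = l^{p}k^{p}$ in the sub-case $p \in \pi$ plays no role: it suffices that every element outside $N$ has order $p$, so all generators of $H_{p}(G)$ lie in $N$.
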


\begin{proof}
Observe that one direction is a triviality as $H_{\pi}(G) \leqslant \bigcap_{p \in \pi} H_{p}(G)$.

Suppose $H_{\pi}(G) <  G$ and let $M$ be a maximal normal subgroup of $G$ containing $H_{\pi}(G)$.  Then $\order{G:M}=p$ for some prime $p \in \pi(G)$ and since $\exp(G/M) \mid \prod_{q\in \pi} q$, we have $p \in \pi$.  By hypothesis, there exists a generator $x$ of $H_{p}(G)$ such that $x \notin M$.  Hence $x^{q}=1$ for some $q \in \pi\setminus\{p\}$.  Consequently, $Mx$ is a $q$-element of the $p$-group $G/M$ implying $x \in M$, a contradiction.  Therefore, $H_{\pi}(G) = G$.
\end{proof}

We are now ready to prove the main theorem, which we restate below.

\begin{theorem*}
Let $G$ be a finite solvable group and set $\pi=\pi(G)$.  Then $1 < H_{\pi}(G) < H_{p}(G)$ for all $p \in \pi$ if and only if $G$ is a Frobenius group whose Frobenius kernel, $F$, is a nonabelian $q$-group such that $1 < H_{q}(F) < F$, and whose Frobenius complement has prime order.  In this case, $H_{\pi}(G) = H_{q}(F)$.
\end{theorem*}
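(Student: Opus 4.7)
The plan is to apply Qian's theorem to $N := H_{\pi}(G)$. By the very definition of $H_{\pi}$, any $g \in G \setminus N$ is a non-generator, so $g^{p} = 1$ for some $p \in \pi$ and thus $g$ has prime order. Solvability rules out Qian's case (i), and if $G$ is a $p$-group (case (iii)) then $\pi(G) = \pi = \{p\}$ forces $H_{\pi}(G) = H_{p}(G)$, contradicting the strict inclusion. The remaining work is to distinguish Qian's cases (ii) and (iv).

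The main obstacle is eliminating case (iv), where $G = (L \times K) \rtimes A$, $N = L \times K$, $A$ is of prime order $p$, $L$ is a Hall $p'$-subgroup, and $K$ is a $p$-group. Here I would compute $H_{p}(G)$ explicitly. Elements of $G \setminus N$ have order $p$, so $x^{p} = 1$ and none of them is a generator of $H_{p}(G)$. Inside $N$, since $L$ and $K$ centralize each other and $L$ is a $p'$-group, $(lk)^{p} = l^{p}k^{p} \neq 1$ whenever $l \neq 1$. Hence for every $l \in L \setminus \{1\}$ and every $k \in K$, both $l$ and $lk$ are generators of $H_{p}(G)$, yielding $k = l^{-1}(lk) \in H_{p}(G)$; combined with $L \leq H_{p}(G)$ this forces $H_{p}(G) = N = H_{\pi}(G)$, contradicting $H_{\pi}(G) < H_{p}(G)$. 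The direct-product structure is precisely what triggers this collapse, and spotting it is the crux.

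Only Qian's case (ii) remains: $G = F \rtimes A$ is Frobenius, $F$ is a $q$-group, $A$ has prime order $p$ (necessarily $p \neq q$, so $\pi = \{p,q\}$). Since every element of $G \setminus F$ has order $p$, the generators of $H_{\pi}(G)$ must lie in $F$ and have order divisible by $q^{2}$, giving $H_{\pi}(G) = H_{q}(F)$. Parallel calculations yield $H_{p}(G) = F$ (every element of $F \setminus \{1\}$ is a generator, while $G \setminus F$ contributes none) and $H_{q}(G) = G$ (every element of $G \setminus F$ is a generator, and $\langle G \setminus F \rangle = G$ via $y = (ya)a^{-1}$), so the hypothesis reduces exactly to $1 < H_{q}(F) < F$. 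To force $F$ nonabelian I would argue the contrapositive: in an abelian $q$-group $F$ with $\exp(F) > q$, for any $y \in F$ of order greater than $q$ and any $x \in F$ of order $q$, $(yx)^{q} = y^{q} \neq 1$, so both $y$ and $yx$ lie in $H_{q}(F)$, forcing $x = y^{-1}(yx) \in H_{q}(F)$ and hence $H_{q}(F) = F$, a contradiction. The converse direction then follows immediately from the same three identifications $H_{\pi}(G) = H_{q}(F)$, $H_{p}(G) = F$, $H_{q}(G) = G$.
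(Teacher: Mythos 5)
Your proof is correct, and while it rests on the same external pillar --- Qian's theorem --- it routes around most of the paper's internal machinery. The paper first uses Lemma \ref{lem1} and Proposition \ref{prop1} to produce a unique prime $p$ with $H_{p}(G)<G$, applies Qian's theorem to $N=H_{p}(G)$, and then cites Hughes--Thompson to identify $F=H_{p}(G)$; you instead apply Qian directly to $N=H_{\pi}(G)$ and do the rest by hand. Two of your steps are genuine additions. First, your explicit elimination of Qian's case (iv) --- showing that the direct-product structure of $N=L\times K$ forces $H_{p}(G)=N=H_{\pi}(G)$ via $k=l^{-1}(lk)$ --- is an argument the paper leaves implicit, and making it explicit is a real gain in completeness. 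Second, your direct computations $H_{p}(G)=F$ and $H_{q}(G)=G$ replace the Hughes--Thompson citation, and your contrapositive argument that an abelian $q$-group has $H_{q}(F)\in\{1,F\}$ replaces the paper's observation that $Z(F)\leqslant H_{\pi}(G)<F$; these are the same central-element trick, yours being the special case $Z(F)=F$. Two small points to tidy: in case (iv) you should note $L\neq 1$ (if $L=1$ then $G$ is a $p$-group and your case (iii) argument already applies), and in the converse direction the fact that every element of $G\setminus F$ has order $p$ no longer comes from Qian's conclusion but from the standard fact that the conjugates of the complement cover $G\setminus F$ in a Frobenius group. Neither affects correctness.
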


\begin{proof}
Suppose $1 < H_{\pi}(G) < H_{p}(G)$ for all $p \in \pi$.  By Lemma \ref{lem1} and Proposition \ref{prop1}, there exists a unique prime $p \in \pi$ such that $H_{p}(G) < G$.  It follows from (\cite{Qian2005}, Theorem 1) that $G = FA$ is a Frobenius group with complement, $A$, of prime order $p$, and kernel, $F$, a $q$-group.  Now $F = H_{p}(G)$ by (\cite{HughThomp59}, Theorem 2) and so $H_{\pi}(G) < F$.  In particular, we have $H_{\pi}(G) = H_{q}(F)$.  If there exists $z \in Z(F) \setminus H_{\pi}(G)$, then $1=(zh)^{q} = h^{q}$ for all $h \in H_{\pi}(G)$ contrary to our assumption.  Therefore, $Z(F) \leqslant H_{\pi}(G)$ (in fact, $Z(F) < H_{\pi}(G)$ by similar reasoning) and $F$ satisfies the conclusion.

The converse follows immediately from the hypotheses.
\end{proof}

The statement of Theorem \ref{mainthm} relied upon taking our set of primes to be precisely $\pi(G)$.  The following Corollary addresses the natural question of what can be said of $H_{\pi}(G)$ when $\pi \subset \pi(G)$.

\begin{corollary}
Let $G$ be a finite solvable group, $\order{\pi(G)}\geq 2$, $\pi \subset \pi(G)$, and suppose there exists $p\in\pi(G)$ such that $H_{p}(G) < G$.
	\begin{enumerate}[i$)$.]
	\item If $p \notin \pi$, then $H_{\pi}(G)=G$.
	\item If $p \in \pi$, then $H_{\pi}(G)=H_{p}(G)$.
	\end{enumerate}
\end{corollary}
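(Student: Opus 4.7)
The plan is to use Proposition~\ref{prop1} to pin down the prime $p$: since $H_p(G) < G$, this $p$ is the unique prime in $\pi(G)$ whose Hughes subgroup is proper, and hence $H_q(G) = G$ for every $q \in \pi(G) \setminus \{p\}$. For (i), as $p \notin \pi$, every $q \in \pi$ satisfies $H_q(G) = G$, so Lemma~\ref{lem1} forces $H_\pi(G) = G$ (the case $\pi = \emptyset$ being immediate from the definition).

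For (ii), the inclusion $H_\pi(G) \leqslant H_p(G)$ is free from~(\ref{eqn1}); the work is the reverse inclusion. Since $G$ is solvable, not a $p$-group, and every nonidentity element of $G \setminus H_p(G)$ has order $p$, Qian's theorem applies with $N = H_p(G)$, placing $G$ in its case (ii) or (iv). In the Frobenius case (ii), $\pi(G) = \{p,q\}$ forces $\pi = \{p\}$, and the conclusion is immediate. So the main work is in case (iv): $G = (L \times K) \rtimes A$, $\order{A}=p$, $L$ a Hall $p'$-subgroup, and $H_p(G) = L \times K$. Since $\pi \subsetneq \pi(G) = \{p\} \cup \pi(L)$ with $p \in \pi$, there exists $r \in \pi(L) \setminus \pi$; I would fix $y_0 \in L$ of order $r$ and verify $y_0 \in H_\pi(G)$ by checking $y_0^t \neq 1$ for every $t \in \pi$.

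The core step is to show $L \times K \leqslant H_\pi(G)$. For each prime $r' \in \pi(L)$ and each $\ell$ in the Sylow $r'$-subgroup $L_{r'}$: if $r' \notin \pi$, then $\ell$ itself satisfies the generator condition; if $r' \in \pi$, the product $\ell y_0$ commutes (by nilpotence of $L$), and checking each $t \in \pi$ shows $\ell y_0 \in H_\pi(G)$, so $\ell = (\ell y_0)y_0^{-1} \in H_\pi(G)$. Since $L$ is nilpotent, any $\ell \in L$ is the product of its primary components, so $L \leqslant H_\pi(G)$. The same device applied to $k \in K$ (using that $k$ and $y_0$ commute, so $(k y_0)^p = y_0^p \neq 1$ even when $\order{k}=p$) yields $K \leqslant H_\pi(G)$; writing a generic element of $L \times K$ as $\ell k$ then completes the reverse inclusion.

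The main obstacle is precisely these ``borderline'' elements---those $\ell \in L$ of prime order lying in $\pi$, and those $k \in K$ of order exactly $p$---which are not themselves generators of $H_\pi(G)$; the auxiliary element $y_0$, whose order $r$ lies outside $\pi$, is the device that promotes them into generators when multiplied in.
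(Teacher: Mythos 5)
Your proof is correct, and part (i) matches the paper exactly (uniqueness of $p$ via Proposition \ref{prop1}, then Lemma \ref{lem1}). For part (ii), however, you take a genuinely different and more self-contained route. The paper disposes of $\order{\pi(G)}=2$ as trivial and then simply asserts that (ii) ``follows from Theorem \ref{mainthm}''; unpacking that citation, the intended argument is that the exceptional Frobenius configuration of Theorem \ref{mainthm} forces $\order{\pi(G)}=2$, so for $\order{\pi(G)}>2$ one must have $H_{\pi(G)}(G)=H_{q}(G)$ for some $q$ (necessarily $q=p$), whence $H_{p}(G)=H_{\pi(G)}(G)\leqslant H_{\pi}(G)\leqslant H_{p}(G)$ --- an argument that still leaves the case $H_{\pi(G)}(G)=1$ to be excluded separately. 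You instead apply Qian's structure theorem directly to $N=H_{p}(G)$ (legitimate, since $H_{p}(G)$ is characteristic and every element of $G\setminus H_{p}(G)$ has order exactly $p$), reduce to case (iv), and prove the reverse inclusion $L\times K\leqslant H_{\pi}(G)$ by hand: the auxiliary element $y_{0}$ of prime order $r\in\pi(G)\setminus\pi$ is itself a generator of $H_{\pi}(G)$, and multiplying it into the ``borderline'' primary components (using that distinct primary components of the nilpotent group $L\times K$ commute and intersect trivially, so $(\ell y_{0})^{t}=\ell^{t}y_{0}^{t}\neq 1$ for all $t\in\pi$) promotes every element of $L\times K$ into $H_{\pi}(G)$. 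This costs more writing but buys transparency: it makes explicit exactly where the properness of $\pi$ in $\pi(G)$ is used, and it covers uniformly the degenerate possibility that every nonidentity element of $G$ has prime order, which the paper's one-line appeal to Theorem \ref{mainthm} does not obviously address. I find no gap in your argument.
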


\begin{proof}
	First, the results are trivial if $\order{\pi(G)}=2$ and so we may assume $\order{\pi(G)}>2$.  Now ($i$) follows immediately from Proposition \ref{prop1} and Lemma \ref{lem1}, whereas ($ii$) follows from Theorem \ref{mainthm} given our assumption.
\end{proof}

\begin{enumerate}
\item[\textbf{4.}] \textbf{Further Exploration}
\end{enumerate}

As indicated by Theorem \ref{mainthm}, groups with proper and nontrivial $H_{\pi}$-subgroups have a very restrictive structure, and explicit examples remain elusive.  To perhaps stimulate some additional investigation into this specific case, we pose the following question:

\begin{quest*}\label{questA}
Does there exist a Frobenius group, $FA$, where the kernel, $F$, is a nonabelian $q$-group which satisfies $1 < H_{q}(F) < F$, and the complement, $A$, is of prime order $p$? 
\end{quest*}

In the pursuit of an example, there are a few restrictions that can be made about the primes $p$ and $q$.  First, as $F$ is nonabelian, the Frobenius complement $A$ cannot be even (\cite{Isaacs}, Theorem 6.3); in particular, $A$ cannot be a 2-group, ie. $p \neq 2$.  Next, we know the Hughes conjecture holds for the primes 2 and 3, so if $q=2,3$, then $\order{F:H_{q}(F)}=q$.  Since $\chr{H_{q}(F)}{F}$, $F/H_{q}(F)$ admits a Frobenius action by $A$ implying $p=\order{A} \dv q-1$.  However, as $A$ is nontrivial and not a 2-group, we must have $q \neq 2,3$.

We would like to thank Evgeny Khukhro for inspiring the following by a helpful remark highlighting the relationship between regular $p$-groups and the Hughes subgroup.  Recall that a $p$-group $G$ is called regular if, for all $x,y \in G$, $x^{p}y^{p}=\prod_{i} z_{i}^{p}$ where $z_{i} \in \cyclic{x,y}'$, and we shall rely on a few results from Section III.10 of \cite{Huppert1967} to help prove the following:

\begin{lemma}\label{lem2}
Let $G$ be a regular $p$-group.  Then $H_{p}(G) = 1$ or $G$.
\end{lemma}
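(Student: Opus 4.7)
The plan is to reduce the statement to a simple case analysis based on whether $G$ has any element of order strictly greater than $p$. The key structural fact I will invoke is that in a regular $p$-group, the set $\Omega_{1}(G) = \{x \in G : x^{p}=1\}$ is in fact a subgroup (this is standard and, as the paragraph above the lemma notes, is available from Section III.10 of Huppert).

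First I would dispose of the trivial case $\Omega_{1}(G) = G$: here every element of $G$ satisfies $x^{p}=1$, so the defining generating set for $H_{p}(G)$ is empty and $H_{p}(G)=1$.

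The substantive case is $\Omega_{1}(G) < G$, and there I would aim to show $H_{p}(G) = G$. It suffices to prove that every $x \in \Omega_{1}(G)$ lies in $H_{p}(G)$, since every element outside $\Omega_{1}(G)$ is by definition a generator of $H_{p}(G)$. To do this, I would fix any $y \in G \setminus \Omega_{1}(G)$, which exists by hypothesis; then $y^{p}\neq 1$, so $y \in H_{p}(G)$. Using that $\Omega_{1}(G)$ is a subgroup, the product $xy$ cannot lie in $\Omega_{1}(G)$ (else $y = x^{-1}(xy)$ would too), so $(xy)^{p}\neq 1$ and $xy \in H_{p}(G)$. Writing $x = (xy)y^{-1}$ then places $x$ in $H_{p}(G)$, finishing the case.

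I do not expect any real obstacle; the entire argument pivots on the single structural fact that $\Omega_{1}$ is a subgroup in the regular setting, and once that is in hand the rest is a one-line manipulation. The only mildly delicate point is being careful in the first case that an empty generating set yields the trivial subgroup, which matches the convention implicit in the definition of $H_{p}(G)$ used throughout the paper.
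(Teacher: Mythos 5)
Your proof is correct and follows essentially the same route as the paper: both hinge on the single fact that regularity makes $\Omega_{1}(G)$ a subgroup, so that $G = H_{p}(G) \cup \Omega_{1}(G)$. The only difference is presentational --- the paper invokes the standard fact that a group is never the union of two proper subgroups, while you prove that fact inline via the translation argument $x = (xy)y^{-1}$.
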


\begin{proof}
By (\cite{Huppert1967}, III.10.5 Hauptsatz), $\Omega_{1}(G) = \{g \in G : g^{p}=1\}$ (note it is the set of such elements). Clearly $G \setminus \Omega_{1}(G) \subseteq H_{p}(G)$ and it follows that $G = H_{p}(G) \cup \Omega_{1}(G)$.  Since $G$ cannot be the union of two proper subgroups, either $H_{p}(G) = G$ or $\Omega_{1}(G) = G$ forcing $H_{p}(G)=1$.
\end{proof}

The following corollary is clear from (\cite{Huppert1967}, III.10.2 Satz) and Lemma \ref{lem2}.

\begin{corollary}\label{cor1}
If $G$ is a $p$-group with $1 < H_{p}(G)<G$, then $G$ has nilpotence class, $c(G)$, at least $p$.
\end{corollary}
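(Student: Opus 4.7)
The plan is to argue by contraposition and invoke exactly the two facts cited in the hint. Suppose toward contradiction that $G$ is a $p$-group with $c(G) < p$. Huppert's III.10.2 Satz states that any $p$-group whose nilpotence class is strictly less than $p$ is regular in the sense recalled just before Lemma \ref{lem2}. Thus $G$ would be a regular $p$-group, and applying Lemma \ref{lem2} directly yields $H_p(G) \in \{1, G\}$, contradicting the hypothesis $1 < H_p(G) < G$. Therefore $c(G) \geq p$.

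There is no real obstacle here: the work has already been done in Lemma \ref{lem2} (which repackaged the Hauptsatz III.10.5 characterization $\Omega_1(G) = \{g \in G : g^p = 1\}$ together with the fact that a group is not the union of two proper subgroups), so the corollary is simply the contrapositive of the composition of III.10.2 with Lemma \ref{lem2}. The only thing worth spelling out in the write-up is that the class hypothesis feeds into regularity via III.10.2, at which point Lemma \ref{lem2} closes the argument.
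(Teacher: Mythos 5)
Your argument is correct and is exactly the paper's intended proof: the paper simply remarks that the corollary is clear from Huppert's III.10.2 Satz (class less than $p$ implies regular) combined with Lemma \ref{lem2}, which is precisely the contrapositive chain you spell out. Nothing further is needed.
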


Returning to the situation posed in our Question, we have from Corollary \ref{cor1} that $F$ must have nilpotence class at least $q$, where $q>3$.

\begin{lemma}\label{lem3}
Let $G$ be a $q$-group with $1<H_{q}(G)<G$ and admitting a fixed-point-free (f.p.f) automorphism $\phi$ of prime order $p$. Then $G$ cannot be metabelian.
\end{lemma}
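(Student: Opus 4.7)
Suppose for contradiction that $G$ is metabelian, so $G''=1$. First, $p\neq q$: any automorphism of prime order $q$ of a nontrivial $q$-group fixes a nonidentity element of $Z(G)$. Hence $\phi$ acts coprimely. Since an abelian $q$-group $A$ satisfies $H_q(A)\in\{1,A\}$, the hypothesis $1<H_q(G)<G$ forces $G$ to be nonabelian, so $G'\neq 1$. Both $G'$ and $G/G'$ are thus nontrivial abelian $q$-groups on which $\phi$ induces coprime f.p.f.\ automorphisms of order $p$, and the minimal polynomial of $\phi$ on each such $\mathbb{F}_q$-vector space divides the $p$-th cyclotomic polynomial $\Phi_p(x)=1+x+\cdots+x^{p-1}$.

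The plan is to exploit the interaction of the two abelian sections through the metabelian Hall--Petresco expansion. Define the norm $N(x):=x\phi(x)\cdots\phi^{p-1}(x)$. Because $1+\phi+\cdots+\phi^{p-1}$ annihilates $G/G'$, we have $N(x)\in G'$ for every $x\in G$; because it also annihilates $G'$, we have $N|_{G'}=1$. Next, choose $y\in G\setminus H_q(G)$, so $y^q=1$. Using the bijectivity of $g\mapsto g^{-1}\phi(g)$ (a standard consequence of coprime f.p.f.\ action), write $y=u^{-1}\phi(u)$ and expand $(u^{-1}\phi(u))^q=1$ via Hall--Petresco in $G$. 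Since $G''=1$, all commutators of depth $\geq 2$ inside $[G',G']$ vanish, and the formula collapses to a product in the abelian group $G'$ of left-normed commutators of the form $[\phi(u),u,u,\ldots,u]$ with coefficients $\binom{q}{k}$. Because $\binom{q}{k}\equiv 0\pmod q$ for $1\leq k\leq q-1$, most terms contribute only through their $q$-torsion; combining the resulting relation with $\Phi_p(\phi)=0$ acting on $G'$ should force a polynomial in $\phi$ to vanish on an element of $G'$ whose form is incompatible with that vanishing, producing a nonzero $\phi$-fixed element of $G'$ and contradicting f.p.f.

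The main obstacle is precisely this last step: tracking the surviving commutator terms after imposing $\Phi_p(\phi)=0$ on $G'$ and the $q$-torsion relations, and verifying that the extracted identity is genuinely nontrivial rather than a tautology. This is where the hypothesis $1<H_q(G)<G$ must be used essentially: it guarantees a sufficiently rich supply of $y\in G\setminus H_q(G)$ (and corresponding lifts $u$) so that not all the resulting commutator relations in $G'$ can be simultaneously absorbed by the module structure imposed by the f.p.f.\ $\phi$-action. An alternative route passes to the associated graded $\mathbb{F}_q$-Lie algebra of $G$ (metabelian, carrying a f.p.f.\ $\Z/p$-action) and invokes a structure theorem of Khukhro/Kreknin type for metabelian Lie algebras with a prime-order f.p.f.\ automorphism, converting the problem into a linear-algebra computation in the eigenspace decomposition over $\mathbb{F}_q\otimes_{\Z}\Z[\zeta_p]$.
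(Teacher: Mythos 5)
Your proposal is a plan rather than a proof, and the gap you yourself flag at the end is exactly where the whole argument lives. Everything up to that point is routine bookkeeping for a coprime f.p.f.\ action (the norm $x\phi(x)\cdots\phi^{p-1}(x)$ being trivial, the bijectivity of $g\mapsto g^{-1}\phi(g)$, the induced f.p.f.\ action on $G/G'$ and $G'$); none of it yet uses the hypothesis $1<H_q(G)<G$ in a way that could produce a contradiction. The decisive step --- showing that the Hall--Petresco expansion of $(u^{-1}\phi(u))^q=1$, reduced modulo $\Phi_p(\phi)=0$ on $G'$, yields a \emph{nontrivial} fixed point of $\phi$ in $G'$ --- is not carried out, and there is no indication of why the surviving commutator terms could not simply all cancel. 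Note also that the sections $G/G'$ and $G'$ need not be elementary abelian, so treating them as $\mathbb{F}_q[\zeta_p]$-modules requires an extra reduction you have not supplied. As written, the argument could equally well terminate in a tautology, so the lemma is not proved.

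For comparison, the paper avoids all commutator calculus by chaining three known results: Hogan--Kappe (metabelian groups satisfy the Hughes conjecture, so $\order{G:H_q(G)}=q$), which gives a Frobenius action of $\phi$ on $G/H_q(G)$ and hence $p \mid q-1$, so $p<q$; the Kreknin--Kostrikin bound for a solvable group of derived length $2$ with an f.p.f.\ automorphism of prime order $p$, giving $c(G)<p$; and Corollary \ref{cor1} (via regularity of $p$-groups of small class), giving $c(G)\geq q>p$. The contradiction is between the two class bounds. Your route, if completed, would in effect have to reprove something of comparable depth to Hogan--Kappe by bare hands; if you want to pursue it, the Lie-ring alternative you mention at the end (pass to the associated graded Lie ring and apply the Kreknin--Kostrikin machinery there) is essentially how the cited class bound is proved, so you would be better served by invoking that theorem directly, together with some input that forces $p<q$.
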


\begin{proof}
Suppose $G$ is metabelian. Since metabelian groups satisfy the Hughes conjecture (\cite{HoganKappe}, Theorem 1), we have $\order{G:H_{q}(G)}=q$.  It follows that $\phi$ induces a f.p.f. automorphism of order $p$ on $G/H_{q}(G)$.  In particular, $G/H_{q}(G) \rtimes \cyclic{\phi}$ is a Frobenius group and so $p = \order{\phi} \dv \order{G:H_{q}(G)}-1 = q-1$.  Now a result of V.A. Kreknin and A.I. Kostrikin (\cite{Kreknin}, Theorem B) gives a bound on the nilpotence class of a group in terms of its derived length and the order of a fixed-point-free automorphism of the group.  In particular, as the derived length of $G$ is 2, it follows from (\cite{Kreknin}, Theorem B):
	\[
		c(G) < \frac{(p-1)^{2}-1}{p-2} = \frac{p^{2}-2p}{p-2}=p \leq q-1 < q.
	\]
However, this contradicts Corollary \ref{cor1} and so $G$ cannot be metabelian.
\end{proof}

By the above result, we see that $F$ cannot be metabelian.  From this we can place additional constraints on the order of the Frobenius complement. If $p=3$, then as $F$ is finite and admits a fixed-point-automorphism of order 3, we have by (\cite{Neumann56}, `Theorem') that $F$ has class at most 2, a contradiction. If $p=5$, then by G. Higman's work in (\cite{Higman}, Section 5), we have $c(F)\leq 6$. Yet $c(F) \geq q$, we have already shown $q \neq 2,3,$ and we know $q \neq p$, this yields another contradiction.  Therefore, $p \neq 2, 3$, or $5$.

\begin{corollary}
In the situation of our Question, $q$ cannot be any prime such that $\pi(\order{F}-1) \subseteq \{2,3,5\}$.
\end{corollary}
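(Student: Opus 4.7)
The plan is to combine the divisibility condition intrinsic to every Frobenius group with the constraints on $p$ that have just been established. Recall that in any Frobenius group $FA$ with kernel $F$ and complement $A$, the complement acts fixed-point-freely on $F \setminus \{1\}$. Consequently the $A$-orbits on $F \setminus \{1\}$ all have length exactly $\order{A}$, which forces $\order{A} \dv \order{F}-1$.

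Applying this to the hypothetical Frobenius group of the Question, we get $p \dv \order{F}-1$, so $p \in \pi(\order{F}-1)$. I would then simply invoke what has already been shown in the preceding paragraphs: $p \neq 2$ (because the complement of a nonabelian Frobenius kernel cannot be even, by Isaacs' Theorem 6.3), $p \neq 3$ (because Neumann's theorem would then force $c(F) \leq 2$, contradicting Corollary \ref{cor1}), and $p \neq 5$ (because Higman's bound would give $c(F) \leq 6$, incompatible with $c(F) \geq q > 5$ once $q \neq 2,3$ and $q \neq p$ are taken into account). Hence $p \notin \{2,3,5\}$.

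Now if $q$ is a prime such that $\pi(\order{F}-1) \subseteq \{2,3,5\}$, then membership $p \in \pi(\order{F}-1)$ forces $p \in \{2,3,5\}$, contradicting the above. Therefore no such Frobenius group $FA$ can exist, and the corollary follows.

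This proof is essentially an assembly step rather than a technical argument: the genuine difficulty was already absorbed into the proofs ruling out each of $p = 2, 3, 5$, and into the standard structural fact about Frobenius groups giving $\order{A} \dv \order{F}-1$. I do not anticipate any real obstacle; the only point worth being careful about is to cite the fixed-point-free action of $A$ on $F$ correctly, since this is what underlies the key divisibility.
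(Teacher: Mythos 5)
Your proof is correct and is exactly the argument the paper intends (the paper states this corollary without proof, immediately after establishing $p \neq 2,3,5$): the fixed-point-free action of $A$ on $F$ gives $p = \order{A} \dv \order{F}-1$, so $p \in \pi(\order{F}-1)$, which is incompatible with $\pi(\order{F}-1) \subseteq \{2,3,5\}$.
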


\bibliographystyle{plain}
\bibliography{PreliminaryGenHughesSubgroup}

\vspace{5mm}

\begin{minipage}[t]{0.5\textwidth}
\noindent MARK L. LEWIS \\
Department of Mathematical Sciences, \\
Kent State University, \\
Kent, OH 44242, \\
USA \\
e-mail: \texttt{lewis@math.kent.edu} \newline 
\end{minipage}
\begin{minipage}[t]{0.5\textwidth}
\noindent MARIO SRACIC \\
Department of Mathematical Sciences, \\
Kent State University, \\
Kent, OH 44242, \\
USA \\
e-mail: \texttt{msracic@kent.edu}
\end{minipage}

\end{document}